\def\XXint#1#2#3{{\setbox0=\hbox{$#1{#2#3}{\int}$}
     \vcenter{\hbox{$#2#3$}}\kern-.5\wd0}}
\newcommand{\R}{\mathbb{R}}
\newcommand{\pr}[2]{\textup{P}_{#1}\left(#2\right)}
\newcommand{\set}[2]{\left\{#1\,\left|\;#2\right.\right\}}
\newcommand{\mint}[3]{\int_{#1}#2\,\textup{d}#3}
\newcommand{\Pab}[1]{\textup{P}_{[a,b]}\left(#1\right)}
\newcommand{\indi}[1]{{\chi}}
\newtheorem{Theorem}{Theorem}[section]
\newtheorem{Lemma}[Theorem]{Lemma}
\newtheorem{LemmaAndDef}[Theorem]{Lemma and Definition}
\newtheorem{Corollary}[Theorem]{Corollary}
\theoremstyle{definition}
\newtheorem{Example}[Theorem]{Example}
\numberwithin{equation}{section}
\def\mathref#1{\ifmmode\mathrm{(\ref{#1})}\else(\ref{#1})\fi}
\def\rhx2{\sqrt{1+ r_{h,x}^2}}
\def\L2{{L^2(\Gamma)}}
\def\Lz{{L^2_0(\Gamma)}}
\def\Lzd{{L^2_{0}(\Gamma^h)}}
\def\Ld{{L^2(\Gamma^h)}}
\def\Hone{{H^1(\Gamma)}}
\def\Htwo{{H^2(\Gamma)}}
\def\C1{{C^1(\Gamma)}}
\def\Uad{{U_{ad}}}
\def\Uadh{{U_{ad} ^h}}
\def\Ltwoh{{L^2(\Gamma^h)}}
\def\C{c_\textup{int}}
\def\CF{C_\textup{FE}}
\def\ldown{{(\cdot)_l}}
\def\lup{{(\cdot)^l}}
\begin{document}

\title{Optimal Control of the Laplace-Beltrami operator on compact surfaces -- concept and numerical treatment}
\author{Michael Hinze and Morten Vierling}

\Nr{2011-1}%

\date{January 2011}

\maketitle
\newpage

\begin{center}
 {\bf \LARGE  }
 \end{center}
 

\pagenumbering{arabic}

{\small {\bf Abstract:} 
We consider optimal control problems of elliptic PDEs on hypersurfaces $\Gamma$ in $\R^n$ for $n=2,3$. The leading part of the PDE is given by the Laplace-Beltrami operator, which is discretized by finite elements on a polyhedral approximation of $\Gamma$. The discrete optimal control problem is formulated on the approximating surface and is solved numerically with a semi-smooth Newton algorithm. We derive optimal a priori error estimates for problems including control constraints and provide numerical examples confirming our analytical findings.
} \\[2mm]
{\small {\bf Mathematics Subject Classification (2010): 58J32 , 49J20, 49M15} } \\[2mm] 
{\small {\bf Keywords:} Elliptic optimal control problem, Laplace-Beltrami operator, surfaces, control
 constraints, error estimates,semi-smooth Newton method.}
 
\section{Introduction}\label{S:Intro}

We are interested in the numerical treatment of the following linear-quadratic optimal control problem on a $n$-dimensional, sufficiently smooth hypersurface $\Gamma\subset \R^{n+1}$, $n=1,2$.

\begin{equation}\label{E:OptProbl}
\begin{split}
\min_{u\in\L2,\,y\in \Hone} J(u,y)&=\frac{1}{2}\|y-z\|_\L2^2+\frac{\alpha}{2}\|u\|_\L2^2\\
\text{subject to }\quad& u\in U_{ad} \textup{ and }\\
& \mint{\Gamma}{\nabla_\Gamma y \nabla_\Gamma \varphi+\mathbf cy\varphi}{\Gamma} =\mint{\Gamma}{u\varphi}{\Gamma}\,,\forall\varphi\in\Hone
\end{split}
\end{equation}
with   $U_{ad}=\set{v\in \L2}{a\le v\le b}$, $a<b\in\R$ . For simplicity we will assume $\Gamma$ to be compact and $\mathbf c=1$. In  section \ref{Ciszero} we briefly investigate the case $\mathbf c=0$, in section \ref{S:Examples} we give an example on a surface with boundary.

Problem \eqref{E:OptProbl} may serve as a mathematical model for the optimal distribution of surfactants on a biomembrane $\Gamma$ with regard to achieving a prescribed desired concentration $z$ of a quantity $y$.

It follows by standard arguments that \eqref{E:OptProbl} admits a unique solution $u\in\Uad$ with unique associated state $y=y(u)\in\Htwo$.

Our numerical approach uses variational discretization applied to \eqref{E:OptProbl}, see \cite{Hinze2005} and  \cite{HinzePinnauUlbrich2009}, on a discrete surface $\Gamma^h$ approximating $\Gamma$. The discretization of the state equation in \eqref{E:OptProbl} is achieved by the finite element method proposed in \cite{Dziuk1988}, where a priori error estimates for finite element approximations of the Poisson problem for the Laplace-Beltrami operator are provided. Let us mention that uniform estimates are presented in  \cite{Demlow2009}, and steps towards a posteriori error control for elliptic PDEs on surfaces are taken by Demlow and Dziuk in \cite{DemlowDziuk2007}.
For alternative approaches for the discretization of the state equation by finite elements see the work of Burger \cite{Burger2008}. Finite element methods on moving surfaces are developed by Dziuk and Elliott in \cite{DziukElliott2007}.
To the best of the authors knowledge, the present paper contains the first attempt to treat optimal control problems on surfaces.
\vspace{1ex}

We assume that $\Gamma$ is of class $C^2$ with unit normal field $\nu$. As an embedded, compact hypersurface in $\mathbb R^{n+1}$ it is orientable and hence the zero level set of a signed distance function $|d(x)| = \textup{dist}(x,\Gamma)$. We assume w.l.o.g. $\nabla d(x)=\nu(x)$ for $x\in\Gamma$.
Further, there exists an neighborhood $\mathcal N\subset\R^{n+1}$ of $\Gamma$, such that $d$ is also of class $C^2$ on $\mathcal N$ and the projection 

\begin{equation}\label{E:Projection}a:\mathcal N\rightarrow\Gamma\,,\quad a(x) = x-d(x)\nabla d(x)\end{equation} 
is unique, see e.g.  \cite[Lemma 14.16]{GilbargTrudinger1998}. Note that $\nabla d (x)=\nu(a(x))$.

Using $a$ we can extend any function $\phi:\Gamma\rightarrow\R$ to $\mathcal N$ as $\bar \phi(x) = \phi(a(x))$. This allows us to represent the surface gradient in global exterior coordinates $\nabla_\Gamma \phi=(I-\nu\nu^T)\nabla \bar\phi$, with the euclidean projection $(I-\nu\nu^T)$ onto the tangential space of $\Gamma$. 

We use the Laplace-Beltrami operator $\Delta_\Gamma = \nabla_\Gamma\cdot\nabla_\Gamma$ in its weak form i.e. $\Delta_\Gamma:H^1(\Gamma)\rightarrow {H^1}(\Gamma)^{*}$
$$
y\mapsto - \mint{\Gamma}{\nabla_\Gamma y \nabla_\Gamma (\,\cdot\,)}{\Gamma}\in {H^1}(\Gamma)^*\,.
$$

Let $S$ denote the prolongated restricted solution operator of the state equation
\begin{equation*}
S:\L2\rightarrow \L2\,,\quad u\mapsto y\qquad-\Delta_\Gamma y + \mathbf c y=u\,,
\end{equation*}
 which is compact and constitutes a linear homeomorphism onto $\Htwo$, see \cite[1. Theorem]{Dziuk1988}.

By standard arguments we get the following necessary (and here also sufficient) conditions for optimality of $u\in \Uad$
\begin{equation}\label{I:NecCond}
\langle \nabla_u J(u,y(u)), v-u\rangle_\L2=\langle \alpha u+ S^*(S u-z), v-u\rangle_\L2\ge 0\,\quad\forall v\in \Uad\,,
\end{equation}
We rewrite \eqref{I:NecCond} as
\begin{equation}\label{E:NecCond1}
u = \pr{\Uad}{-\frac{1}{\alpha} S^*(S u-z)}\,,
\end{equation}
where $\textup{P}_{U_{ad}}$ denotes the $L^2$-orthogonal projection onto $\Uad$. 

\section{Discretization}
We now discretize \eqref{E:OptProbl} using an approximation $\Gamma^h$ to $\Gamma$ which is globally of class $C^{0,1}$. Following Dziuk, we consider polyhedral $\Gamma^h=\bigcup_{i\in I_h}T_h^i$ consisting of triangles $T_h^i$ with corners  on $\Gamma$, whose maximum diameter is denoted by $h$. With FEM error bounds in mind we assume the family of triangulations $\Gamma^h$ to be regular in the usual sense that the angles of all triangles are bounded away from zero uniformly in $h$.

We assume for $\Gamma^h$ that $a(\Gamma^h)=\Gamma$, with $a$ from \eqref{E:Projection}. For small $h>0$ the projection $a$ also is injective on $\Gamma^h$. In order to compare functions defined on $\Gamma^h$ with functions on $\Gamma$ we use $a$ to lift a function $y\in \Ltwoh$ to $\Gamma$ 
$$
y^l(a(x))=y(x)\quad \forall x\in \Gamma^h\,,
$$
and for $y\in\L2$ and sufficiently small $h>0$ we define the inverse lift
$$
y_l(x)=y(a(x))\quad \forall x\in \Gamma^h\,.
$$
For small mesh parameters $h$ the lift operation $\ldown: \L2\rightarrow \Ltwoh$ defines a linear homeomorphism with inverse $\lup$. Moreover, there exists $\C>0$ such that 
\begin{equation}\label{E:IntBnd}
1-\C h^2\le \|\ldown\|_{\mathcal L(\L2,\Ld)}^2,\|\lup\|_{\mathcal L(\Ld,\L2)}^2\le 1+ \C h^2\,,\end{equation}
 as the following lemma shows.
\begin{LemmaAndDef}\label{L:Integration}
Denote by  $\frac{\textup{d}\Gamma}{\textup{d}\Gamma^h}$ the Jacobian of $a|_{\Gamma^h}:\Gamma^h\rightarrow\Gamma$, i.e. $\frac{\textup{d}\Gamma}{\textup{d}\Gamma^h}=|\textup{det}(M)|$ where $M\in\R^{n\times n}$ represents the Derivative $\textup da(x):T_x\Gamma^h\rightarrow T_{a(x)}\Gamma$ with respect to arbitrary orthonormal bases of the respective tangential space. For small $h>0$ there holds
$$
\sup_\Gamma \left| 1-\frac{\textup{d}\Gamma}{\textup{d}\Gamma^h}\right|\le \C h^2\, ,
$$
Now let $\frac{\textup{d}\Gamma^h}{\textup{d}\Gamma}$ denote $|\textup{det}(M^{-1})|$, so that by the change of variable formula
$$
\left|\mint{\Gamma^h}{v_l}{\Gamma^h}-\mint{\Gamma}{v}{\Gamma}\right|=\left|\mint{\Gamma}{v\frac{\textup{d}\Gamma^h}{\textup{d}\Gamma}-v}{\Gamma}\right|\le \C h^2\|v\|_{L^1(\Gamma)}\,.
$$
\end{LemmaAndDef}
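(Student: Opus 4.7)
The plan is to derive the pointwise bound on the Jacobian first, and then obtain the integral inequality by a direct change of variables. The two ingredients that drive the pointwise estimate are the $C^2$-regularity of $d$ on $\mathcal N$ and the interpolation property that the vertices of each triangle $T_h^i$ lie on $\Gamma$. Combining these via Taylor expansion around the vertices of the triangle containing $x$ yields the two classical bounds valid for every $x \in \Gamma^h$,
$$
|d(x)| \le C h^2, \qquad |\nu^h(x) - \nu(a(x))| \le C h,
$$
where $\nu^h$ denotes the piecewise constant unit normal field of $\Gamma^h$ and $C$ depends only on $\|D^2 d\|_{L^\infty(\mathcal N)}$.

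With these in hand I would differentiate \eqref{E:Projection}, using that $\nabla d = \nu \circ a$ on $\mathcal N$, to obtain
$$
\d a(x) = P(a(x)) - d(x)\, D^2 d(x), \qquad P(y) := I - \nu(y)\nu(y)^T.
$$
In orthonormal bases $\{t_i\}$ of $T_x\Gamma^h$ and $\{s_j\}$ of $T_{a(x)}\Gamma$, the matrix representing $P(a(x))|_{T_x\Gamma^h \to T_{a(x)}\Gamma}$ has determinant equal in modulus to the cosine of the angle between $T_x\Gamma^h$ and $T_{a(x)}\Gamma$, namely $|\langle \nu^h(x), \nu(a(x))\rangle|$, which by the second interpolation bound above differs from $1$ by $O(h^2)$. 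The additive perturbation $d(x)\, D^2 d(x)$ has operator norm $O(h^2)$ by the first bound combined with the uniform boundedness of $D^2 d$ on $\mathcal N$. Multilinearity of the determinant then produces $|\det M| = 1 + O(h^2)$, which is the first claim of the lemma for an appropriate $\C$.

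For the second claim, the change-of-variables formula reads
$$
\mint{\Gamma^h}{v_l}{\Gamma^h} = \mint{\Gamma}{v\,\tfrac{\d\Gamma^h}{\d\Gamma}}{\Gamma},
$$
so subtracting $\mint{\Gamma}{v}{\Gamma}$ and invoking the pointwise estimate on $|1 - \d\Gamma^h/\d\Gamma|$ (which is equivalent to the bound on $|1 - \d\Gamma/\d\Gamma^h|$ for small $h$, after possibly enlarging $\C$) immediately gives the stated inequality. The main technical point of the argument is the determinant expansion in the second paragraph, which cleanly separates the tangent-plane tilt (contributing $\cos\theta = 1 - O(h^2)$) from the curvature correction of order $d(x) = O(h^2)$; the rest of the proof is routine.
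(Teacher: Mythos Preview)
Your argument is correct and is essentially the standard proof of this estimate; the paper itself does not give a proof but simply refers to \cite[Lemma~5.1]{DziukElliott2007}, where precisely this computation (the bounds $|d|\le Ch^2$, $|\nu^h-\nu\circ a|\le Ch$, and the resulting Jacobian expansion) is carried out. So you have supplied what the paper outsources.
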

\begin{proof} see \cite[Lemma 5.1]{DziukElliott2007}\end{proof}

Problem \eqref{E:OptProbl} is approximated by the following sequence of optimal control problems
\begin{equation}\label{E:OptProblDiscr}
\begin{split}
\min_{u\in\Ld,\,y\in H^1(\Gamma^h)} J(u,y)&=\frac{1}{2}\|y-z_l\|_\Ld^2+\frac{\alpha}{2}\|u\|_{L^2(\Gamma^h)}^2\\
\text{subject to }\quad&u\in U_{ad} ^h\textup{ and }\\
&y= S_h u\,,
\end{split}
\end{equation}
with $U_{ad}^h=\set{v\in \Ld}{a\le v\le b}$, i.e. the mesh parameter $h$ enters into $\Uad$ only  through $\Gamma^h$ . Problem \eqref{E:OptProblDiscr} may be regarded as the extension of variational discretization introduced in \cite{Hinze2005} to optimal control problems on surfaces.

In \cite{Dziuk1988} it is explained, how to implement a discrete solution operator $S_h:\Ld\rightarrow \Ld$, such that
\begin{equation}\label{E:FEconvergence}
\|\lup S_h\ldown- S\|_{\mathcal L(\L2,\L2)}\le \CF h^2\,,
\end{equation}
which we will use throughout this paper. See in partikular \cite[Equation (6)]{Dziuk1988} and \cite[7. Lemma]{Dziuk1988}. 
For the convenience of the reader we briefly sketch the method. Consider the space $$V_h=\set{\varphi\in C^0\left(\Gamma^h\right)}{\forall i\in I_h:\:\varphi|_{T_h^i}\in\mathcal P^1(T_h^i)}\subset H^1(\Gamma^h)$$ of piecewise linear, globally continuous functions on $\Gamma^h$. For some $u\in\L2$, to compute $y_h^l = \lup S_h\ldown u$  solve
\begin{equation*}
\mint{\Gamma^h}{\nabla_{\Gamma^h}y_h\nabla_{\Gamma^h}\varphi_i+\mathbf cy_h\varphi_i}{\Gamma^h}=\mint{\Gamma^h}{u_l\varphi_i}{\Gamma^h}\,,\quad \forall \varphi\in V_h
\end{equation*}
for $y_h\in V_h$. 
We choose $\Ld$ as control space, because in general we cannot evaluate $\mint{\Gamma}{v}{\Gamma}$ exactly, whereas the expression $\mint{\Gamma^h}{v_l}{\Gamma^h}$ for piecewise polynomials $v_l$ can be computed up to machine accuracy. Also, the operator $S_h$ is self-adjoint, while $(\lup S_h\ldown)^*=\ldown^*S_h\lup^*$ is not. The adjoint operators of $\ldown$ and $\lup$ have the shapes
\begin{equation}\label{E:AdjLift}
\forall v\in\Ld:\:(\ldown)^*v = \frac{\textup{d}\Gamma^h}{\textup{d}\Gamma}v^l\,,\quad\forall v\in\L2:\:(\lup)^*v = \frac{\textup{d}\Gamma}{\textup{d}\Gamma^h}v_l\,,
\end{equation}
hence  evaluating $\ldown^*$ and $\lup^*$ requires knowledge of  the Jacobians $\frac{\textup{d}\Gamma^h}{\textup{d}\Gamma}$ and $\frac{\textup{d}\Gamma}{\textup{d}\Gamma^h}$ which may not be known analytically.

Similar to \eqref{E:OptProbl}, problem \eqref{E:OptProblDiscr} possesses a unique solution $u_h\in\Uadh$ which satisfies
\begin{equation}\label{E:NecCond2_Discr}
u_h = \pr{U_{ad}^h}{-\frac{1}{\alpha}p_h(u_h)}\,.
\end{equation}
Here $ P_{U_{ad}^h}:\Ld\rightarrow U_{ad}^h$ is the $\Ld$-orthogonal projection onto $\Uadh$ and for $v\in \Ld$  the adjoint state is $p_h(v)= S_h^*(S_h v-z_l)\in H^1(\Gamma^h)$.

\vspace{1ex}

Observe that the projections $\textup{P}_\Uad$ and $\textup{P}_\Uadh$ coincide with the point-wise projection $\textup{P}_{[a,b]}$ on $\Gamma$ and $\Gamma^h$, respectively, and hence
\begin{equation}\label{E:ProjRel}
\left(\pr{\Uadh}{v_l}\right)^l=\pr{\Uad}{v}
\end{equation}
 for any $v\in \L2$.
 
 Let us now investigate the relation between the optimal control problems \eqref{E:OptProbl} and \eqref{E:OptProblDiscr}.
\begin{Theorem}[Order of Convergence]\label{T:Convergence}
Let $u\in\L2$, $u_h\in\Ld$ be the solutions of \eqref{E:OptProbl} and \eqref{E:OptProblDiscr}, respectively. Then  for sufficiently small $ h>0$ there holds
\begin{equation}\label{E:ErrorEstimate}\begin{split}
\alpha \big\|u^l_h-u\big\|_\L2^2+\big\|y^l_h-y\big\|_\L2^2\le\frac{1+\C h^2}{1-\C h^2}\bigg(\frac{1}{\alpha}\left\|\left(\lup S_h^*\ldown- S^*\right)(y-z)\right\|_\L2^2&\dots\\
+\left\|\left(\lup S_h\ldown-S\right)u\right\|_\L2^2\bigg)&\,,
\end{split}\end{equation}
with $y=Su$ and $y_h=S_hu_h$.
\end{Theorem}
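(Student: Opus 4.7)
The strategy is to exploit the quadratic structure by playing the two variational inequalities \eqref{E:NecCond1} and \eqref{E:NecCond2_Discr} off one another. Because the bounds $a,b$ are pointwise, $u_l\in\Uadh$ and $u_h^l\in\Uad$, so each condition may be tested at the lifted optimum of the other problem. The cleanest setup first pulls the continuous condition down to $\Gamma^h$ using \eqref{E:ProjRel}: the identity turns $u=\pr{\Uad}{-\tfrac{1}{\alpha}S^*(y-z)}$ into
\[
u_l=\pr{\Uadh}{-\tfrac{1}{\alpha}\ldown S^*(y-z)}\,,
\]
whose variational form tested at $v=u_h\in\Uadh$ and the variational form of \eqref{E:NecCond2_Discr} tested at $v=u_l$ add and, with $\varepsilon:=u_h-u_l\in\Ld$, rearrange to
\[
\alpha\|\varepsilon\|_\Ld^2\le\langle\ldown S^*(y-z)-S_h^*\ldown(y_h^l-z),\varepsilon\rangle_\Ld\,.
\]

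The decisive algebraic move is to split the right-hand side by adding and subtracting $S_h^*\ldown(y-z)$ and then using $\ldown\lup=\textup{id}_\Ld$ to factor $\ldown S^*-S_h^*\ldown=\ldown(S^*-\lup S_h^*\ldown)$, obtaining
\[
\langle\ldown(S^*-\lup S_h^*\ldown)(y-z),\varepsilon\rangle_\Ld+\langle S_h^*\ldown(y-y_h^l),\varepsilon\rangle_\Ld\,.
\]
The first summand already carries the error $(\lup S_h^*\ldown-S^*)(y-z)$ from \eqref{E:ErrorEstimate}. For the second I would exploit $S_h=S_h^*$ to rewrite it as $\langle y_l-y_h,S_h\varepsilon\rangle_\Ld$, and then compute $S_h u_l=\ldown\lup S_h\ldown u=\bigl(y+(\lup S_h\ldown-S)u\bigr)_l$, so that $S_h\varepsilon=y_h-S_h u_l=\ldown\bigl((y_h^l-y)-(\lup S_h\ldown-S)u\bigr)$. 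Substituting yields
\[
\langle S_h^*\ldown(y-y_h^l),\varepsilon\rangle_\Ld=-\|y_l-y_h\|_\Ld^2-\langle y_l-y_h,((\lup S_h\ldown-S)u)_l\rangle_\Ld\,,
\]
so that $\|y_l-y_h\|_\Ld^2$ migrates to the left-hand side while the remaining cross term features the second error quantity from \eqref{E:ErrorEstimate}.

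Cauchy-Schwarz and the upper bound in \eqref{E:IntBnd}, $\|\ldown\varphi\|_\Ld\le\sqrt{1+\C h^2}\,\|\varphi\|_\L2$, then give
\[
\alpha\|\varepsilon\|_\Ld^2+\|y_l-y_h\|_\Ld^2\le\sqrt{1+\C h^2}\Bigl(\|(\lup S_h^*\ldown-S^*)(y-z)\|_\L2\|\varepsilon\|_\Ld+\|(\lup S_h\ldown-S)u\|_\L2\|y_l-y_h\|_\Ld\Bigr)\,.
\]
Two Young inequalities with weights $\alpha$ and $1$ absorb $\tfrac{\alpha}{2}\|\varepsilon\|_\Ld^2+\tfrac{1}{2}\|y_l-y_h\|_\Ld^2$ into the left and multiply the error squares on the right by $1+\C h^2$. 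Applying \eqref{E:IntBnd} once more in the opposite direction, $\|\ldown\varphi\|_\Ld^2\ge(1-\C h^2)\|\varphi\|_\L2^2$, at $\varphi=u-u_h^l$ and $\varphi=y-y_h^l$ converts the $\Gamma^h$-norms on the left into the $\L2$-norms of \eqref{E:ErrorEstimate} and produces the prefactor $\tfrac{1+\C h^2}{1-\C h^2}$.

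The main technical obstacle I foresee is the algebraic identity in the second paragraph: it is not obvious a priori that self-adjointness of $S_h$, together with the identification $S_h u_l=(\lup S_h\ldown u)_l$, reproduces precisely the error $(\lup S_h\ldown-S)u$ occurring in \eqref{E:ErrorEstimate}. Everything else amounts to a fairly mechanical combination of Cauchy-Schwarz, Young's inequality and the lift comparison \eqref{E:IntBnd}; in particular, no direct appeal to the geometric smallness of $|1-\tfrac{\textup{d}\Gamma}{\textup{d}\Gamma^h}|$ is needed beyond what is already encoded in \eqref{E:IntBnd}.
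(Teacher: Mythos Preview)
Your proposal is correct and follows essentially the same route as the paper: pull the continuous optimality condition down to $\Gamma^h$ via \eqref{E:ProjRel}, add the two variational inequalities, split the adjoint difference by inserting $S_h^*(y-z)_l$, use self-adjointness of $S_h$ to extract $-\|y_l-y_h\|_\Ld^2$, apply Young, and finally convert $\Ld$-norms to $\L2$-norms with \eqref{E:IntBnd}. The only cosmetic difference is that the paper applies Young directly on $\Gamma^h$ and converts norms afterwards, whereas you first insert the lift bound $\sqrt{1+\C h^2}$ via Cauchy--Schwarz and then apply Young; both orderings yield the same constant $\tfrac{1+\C h^2}{1-\C h^2}$.
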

\begin{proof} 
From \eqref{E:ProjRel} it follows that the projection of $-\left(\frac{1}{\alpha}p(u)\right)_l$ onto $U_{ad}^h$ is $u_l$
$$
u_l = \pr{U_{ad}^h}{-\frac{1}{\alpha}p(u)_l}\,,
$$
which we insert into the necessary condition of \eqref{E:OptProblDiscr}. This gives
$$
\langle \alpha u_h + p_h( u_h),u_l-u_h\rangle_\Ld\ge 0\,.
$$
On the other hand $u_l$ is the $\Ld$-orthogonal projection of $-\frac{1}{\alpha}p(u)_l$, thus
$$
\langle-\frac{1}{\alpha}p(u)_l-u_l,u_h-u_l\rangle_\Ld\le 0\,.
$$
Adding these inequalities yields
\begin{equation*}
\begin{split}
\alpha\|u_l- u_h\|^2_\Ld\le&\langle\left(p_h( u_h)-p(u)_l\right),u_l- u_h\rangle_\Ld\\
=&\langle p_h(u_h) -S_h^*(y-z)_l,u_l-u_h\rangle_\Ld + \langle S_h^*(y-z)_l-p(u)_l,u_l-u_h\rangle_\Ld\,.
\end{split}
\end{equation*}
The first addend is estimated via
\[\begin{split}
\langle p_h(u_h) -S_h^*(y-z)_l,u_l-u_h\rangle_\Ld &= \langle y_h-y_l,S_hu_l-y_h\rangle_\Ld\\
&=-\|y_h-y_l\|^2_\Ld +\langle y_h-y_l,S_hu_l-y_l\rangle_\Ld\\
&\le - \frac{1}{2}\|y_h-y_l\|^2_\Ld+ \frac{1}{2}\|S_hu_l-y_l\|^2_\Ld\,.
\end{split}\]
The second addend satisfies
\[
\langle S_h^*(y-z)_l-p(u)_l,u_l-u_h\rangle_\Ld\le \frac{\alpha}{2}\|u_l-u_h\|^2_\Ld+ \frac{1}{2\alpha}\|S_h^*(y-z)_l-p(u)_l\|_\Ld^2\,.
\]
Together this yields
\[
\alpha\|u_l- u_h\|^2_\Ld + \|y_h-y_l\|^2_\Ld\le \frac{1}{\alpha}\|S_h^*(y-z)_l-p(u)_l\|_\Ld^2 + \|S_hu_l-y_l\|^2_\Ld
\]
The claim follows using \eqref{E:IntBnd} for sufficiently small $h>0$. 
\end{proof}

Because both $S$ and $S_h$ are self-adjoint, quadratic convergence follows directly from \eqref{E:ErrorEstimate}. For operators that are not self-adjoint one can use
\begin{equation}\label{E:FEadjointconvergence}
\|\ldown^* S_h^*\lup^*- S^*\|_{\mathcal L(\L2,\L2)}\le \CF h^2\,.
\end{equation}
which is a consequence of \eqref{E:FEconvergence}.
Equation \eqref{E:AdjLift} and Lemma \ref{L:Integration} imply
\begin{equation}\label{E:LiftAdjConvergence}
\|(\ldown)^*-\lup\|_{\mathcal L(\Ld,\L2)}\le\C h^2\,,\quad \|(\lup)^*-\ldown\|_{\mathcal L(\L2,\Ld)}\le\C h^2\,.
\end{equation}
Combine  \eqref{E:ErrorEstimate} with \eqref{E:FEadjointconvergence} and \eqref{E:LiftAdjConvergence}  to proof quadratic convergence for arbitrary linear elliptic state equations.

\section{Implementation}\label{S:Implementation}
In order to solve \eqref{E:NecCond2_Discr} numerically, we proceed as in \cite{Hinze2005} using the finite element techniques for PDEs on surfaces developed in \cite{Dziuk1988} combined with the semi-smooth Newton techniques from \cite{HintermuellerItoKunisch2003} and \cite{Ulbrich2003} applied to the equation
\begin{equation}\label{E:NecCondAlg}
G_h(u_h)=\left(
u_h - \pr{[a,b]}{-\frac{1}{\alpha} p_h(u_h)}\,.
\right)=0
\end{equation}
Since the operator $p_h$ continuously maps $v\in\Ld$ into $H^1(\Gamma^h)$, Equation \eqref{E:NecCondAlg} is semismooth and thus is amenable to a semismooth Newton method. 
The generalized derivative of $G_h$ is given by
\begin{equation*}
DG_h(u)=\left(I + \frac{\indi{u,m}}{\alpha} S_h^*S_h          
\right)\,,
\end{equation*}
where $\indi{}:\Gamma^h\rightarrow\{0,1\}$ denotes the indicator function of the inactive set $\mathcal I(-\frac{1}{\alpha}p_h(u))=\set{\gamma\in\Gamma^h}{a<-\frac{1}{\alpha}p_h(u)[\gamma]<b}$
$$
 \indi{u,m}=\left\{\begin{array}{l} 1\textup{ on } \mathcal I(-\frac{1}{\alpha}p_h(u))\subset\Gamma^h\\
 0 \textup{ elsewhere on }\Gamma^h\end{array}\right.\,,
$$
which we use both as a function and as the operator  $\indi{u,m}:\Ld\rightarrow\Ld$ defined as the point-wise multiplication with the function $\indi{u,m}$. 
A step semi-smooth Newton method for \eqref{E:NecCondAlg} then reads
\[
\left( I + \frac{\indi{u,m}}{\alpha}S_h^*S_h \right) u^+   =-G_h(u)+DG_h(u) u =\pr{[a,b]}{-\frac{1}{\alpha} p_h(u)}+ \frac{\indi{u,m}}{\alpha} S_h^*S_hu\,.
\]

Given $u$ the next iterate $u^+$ is computed by performing three steps
\begin{enumerate}
\item Set $(\left(1-\indi{u}\right)u^+)[\gamma]=\left((1-\indi{u}) \pr{[a,b]}{-\frac{1}{\alpha}p_h(u)+m}\right)[\gamma]$, which is either $a$ or $b$, depending on $\gamma\in\Gamma_h$.
\item Solve 
\begin{equation*}
 \left(I + \frac{\indi{u}}{\alpha} S_h^*S_h \right)\indi{u}u^+={\frac{\indi{u}}{\alpha}\Big(S_h^*z_l-S_h^*S_h\left(1-\indi{u}\right)u^+\Big)}
\end{equation*}for $\indi{u}u^+$ by CG iteration over $L^2(\mathcal I(-\frac{1}{\alpha}p_h(u))$.
\item Set $u^+=\indi{u}u^++(1-\indi{u})u^+\,.$
\end{enumerate}
Details can be found in \cite{HinzeVierling2010} .

\section{The case $\mathbf c=0$}\label{Ciszero}
In this section we investigate the case $\mathbf c=0$ which corresponds to a stationary, purely diffusion driven process. Since $\Gamma$ has no boundary, in this case total mass must be conserved, i.e. the state equation admits a solution only for controls with mean value zero. For such a control the state is uniquely determined up to a constant.
Thus the admissible set $\Uad$ has to be changed to
$$U_{ad}=\set{v\in \L2}{a\le v\le b}\cap\Lz\,,\textup{ where }\Lz:=\set{v\in \L2}{\mint{\Gamma}{v}{\Gamma}=0}\,,$$
and $a<0<b$. Problem \eqref{E:OptProbl} then admits a unique solution $(u,y)$ and there holds $\mint{\Gamma}{y}{\Gamma}=\mint{\Gamma}{z}{\Gamma}$. W.l.o.g we assume $\mint{\Gamma}{z}{\Gamma}=0$ and therefore  only need to consider states with mean value zero. The state equation now reads $y=\tilde Su$ with the solution operator $\tilde S :\Lz\rightarrow\Lz$ of the equation $-\Delta_\Gamma y=u$, $\mint{\Gamma}{y}{\Gamma}=0$.

Using the injection $\Lz\stackrel{\imath}{\rightarrow}\L2$,  $\tilde S$ is prolongated as an operator $S:\L2\rightarrow\L2$ by $S=\imath \tilde S\imath^*$. The adjoint $\imath^*:\L2\rightarrow \Lz$ of $\imath$ is the $L^2$-orthogonal projection onto $\Lz$.
The unique solution of  \eqref{E:OptProbl}  is again characterized by \eqref{E:NecCond1}, where the orthogonal projection now takes the form
\begin{equation*}
 \pr{\Uad}{v}= \Pab{v+m}\,
 \end{equation*}
 with $m\in\R$ chosen such that 
 \begin{equation*}
 \mint{\Gamma}{ \Pab{v+m}}{\Gamma}=0\,.
 \end{equation*}
If for $v\in\L2$ the inactive set $\mathcal I(v+m)=\set{\gamma\in\Gamma}{a<v[\gamma]+m<b}$ is non-empty, the constant m = m(v) is uniquely determined by $v\in\L2$. Hence, the solution  $u\in\Uad$ satisfies
\begin{equation*}
u = \pr{[a,b]}{-\frac{1}{\alpha}p(u)+m\left(-\frac{1}{\alpha}p(u)\right)}\,,
\end{equation*}
with $p(u)= S^*(S u- \imath^*z)\in\Htwo$ denoting the adjoint state and $m(-\frac{1}{\alpha}p(u))\in\R$ is implicitly  given by $\mint{\Gamma}{u}{\Gamma}=0$. Note that $\imath^*\imath$ is the identity on $\Lz$.

In \eqref{E:OptProblDiscr} we now replace $\Uadh$ by $U_{ad}^h=\set{v\in \Ld}{a\le v\le b}\cap\Lzd$. Similar as in \eqref{E:NecCond2_Discr}, the unique solution $u_h$ then satisfies
\begin{equation}\label{E:NecCond2_DiscrZero}
u_h = \pr{U_{ad}^h}{-\frac{1}{\alpha}p_h(u_h)}=\pr{[a,b]}{-\frac{1}{\alpha}p_h(u_h)+m_h\left(-\frac{1}{\alpha}p_h(u_h)\right)}\,,
\end{equation}
with $p_h(v_h)= S_h^*(S_h v_h-\imath^*_hz_l)\in H^1(\Gamma^h)$ and $m_h(-\frac{1}{\alpha}p_h(u_h))\in\R$ the unique constant such that  $\mint{\Gamma^h}{u_h}{\Gamma^h}=0$. Note that $m_h\left(-\frac{1}{\alpha}p_h(u_h)\right)$ is semi-smooth with respect to $u_h$ and thus Equation \eqref{E:NecCond2_DiscrZero} is amenable to a semi-smooth Newton method.

The discretization error between the problems \eqref{E:OptProblDiscr} and \eqref{E:OptProbl} now decomposes into two components, one introduced by the discretization of $U_{ad}$ through the discretization of the surface, the other by discretization of $S$. 

For the first error we need to investigate the relation between $\pr{\Uadh}{u}$ and $\pr{\Uad}{u}$, which is now slightly more involved than in \eqref{E:ProjRel}.
\begin{Lemma}\label{L:mConvergence}
Let $h>0$ be sufficiently small. There exists a constant $C_m>0$ depending only on $\Gamma$, $|a|$ and $|b|$ such that for all $v\in\L2$ with $\mint{\mathcal I(v+m(v))}{}{\Gamma}>0$ there holds 
$$
|m_h(v_l)-m(v)|\le \frac{C_m}{\mint{\mathcal I(v+m(v))}{}{\Gamma}}h^2\,.
$$
\end{Lemma}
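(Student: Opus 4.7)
I would analyse the implicit equations $F(m)=0$ and $F_h(m_h)=0$ defining $m=m(v)$ and $m_h=m_h(v_l)$, where
\[
F(\mu):=\mint{\Gamma}{\Pab{v+\mu}}{\Gamma}\,,\qquad F_h(\mu):=\mint{\Gamma^h}{\Pab{v_l+\mu}}{\Gamma^h}\,.
\]
Both functions are non-decreasing and Lipschitz in $\mu$. Since the pointwise projection commutes with the lift, $\Pab{v_l+\mu}=(\Pab{v+\mu})_l$, and since $|\Pab{v+\mu}|\le\max(|a|,|b|)$, Lemma~\ref{L:Integration} yields
\[
|F_h(\mu)-F(\mu)|\;\le\;\C h^2\max(|a|,|b|)\mint{\Gamma}{}{\Gamma}\;=:\;C_1h^2
\]
uniformly in $\mu$. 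Setting $\mu=m_h$ and using $F_h(m_h)=0$ gives $|F(m_h)|\le C_1 h^2$.

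Next I would quantify how much $F$ grows between $m$ and $m_h$. Assume without loss of generality $m_h\ge m$, the other case being symmetric. A pointwise case distinction based on where $v(\gamma)$ lies relative to the four thresholds $a-m_h,a-m,b-m_h,b-m$ shows that $\Pab{v+m_h}-\Pab{v+m}\ge 0$ everywhere on $\Gamma$ and equals $m_h-m$ precisely on $\set{\gamma\in\Gamma}{a-m<v(\gamma)<b-m_h}$, where neither argument is clipped. Integrating yields
\[
F(m_h)-F(m)\;\ge\;(m_h-m)\,\mint{\set{\gamma\in\Gamma}{a-m<v<b-m_h}}{}{\Gamma}\,.
\]

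The crux is a bootstrap that retains at least half of the inactive mass $\sigma:=\mint{\mathcal I(v+m)}{}{\Gamma}>0$. By continuity of measure from above applied to the nested family $\{v\in(b-m-\delta,b-m)\}\searrow\emptyset$ as $\delta\searrow 0$, there exists $\delta_0=\delta_0(v)>0$ with $\mint{\set{\gamma\in\Gamma}{b-m-\delta_0<v<b-m}}{}{\Gamma}\le\sigma/2$. Plugging $\mu=m+\delta_0$ into the slope estimate at $\mu=m$ (i.e.\ replacing $m_h$ by $m+\delta_0$) gives $F(m+\delta_0)\ge\delta_0\sigma/2>0$. As soon as $h$ is so small that $C_1h^2<\delta_0\sigma/2$, uniform closeness of $F$ and $F_h$ forces $F_h(m+\delta_0)>0=F_h(m_h)$, and monotonicity of $F_h$ gives $m_h<m+\delta_0$. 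Hence $[b-m_h,b-m)\subset(b-m-\delta_0,b-m)$, and therefore $\mint{\set{\gamma\in\Gamma}{a-m<v<b-m_h}}{}{\Gamma}\ge\sigma/2$. Combining with the slope estimate produces $(m_h-m)\sigma/2\le C_1 h^2$, i.e.\ the claim with $C_m=2C_1$, which depends only on $\Gamma$, $|a|$, $|b|$.

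The main obstacle is this bootstrap: one has to first pin $m_h$ inside a neighbourhood of $m$ on which $F$ is strictly increasing with a controllable slope before the quantitative step can be applied. The threshold for "sufficiently small $h$" accordingly depends not merely on $\sigma$ but on the distribution of $v$ near the upper endpoint $b-m$ of the inactive set, which is consistent with the phrasing of the lemma.
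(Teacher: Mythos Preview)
Your proof is correct and follows essentially the same strategy as the paper: bound the discrepancy between the continuous and discrete ``mean-value'' functions by $\C\max(|a|,|b|)\,|\Gamma|\,h^2$ via Lemma~\ref{L:Integration}, then exploit monotonicity together with a $\delta$-shrunk inactive set (obtained by continuity of measure) to convert this into a bound on $|m_h-m|$. The only cosmetic difference is that the paper carries out the slope argument on the discrete side, evaluating $M_v^h(x)=F_h(x)$ at $x=m(v)$ and shifting by the explicit amount $Ch^2/\smash{\int_{\mathcal I_v^\delta}\d\Gamma^h}$ in one stroke, whereas you work on the continuous side, evaluate $F$ at $m_h$, and insert an explicit bootstrap step to first confine $m_h$ to $(m,m+\delta_0)$; both routes lead to the same constant up to a harmless factor, and your closing remark about the $v$-dependent smallness threshold for $h$ matches the paper's proof exactly.
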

\begin{proof}
For $v\in\L2$, $\epsilon>0$ choose $\delta>0$ and $h>0$ so small that  the set
$$\mathcal I_{v}^\delta=\set{\gamma\in\Gamma^h}{a+\delta\le v_l(\gamma)+m(v)\le b-\delta}\,.$$
satisfies $\mint{\mathcal I_{v}^\delta}{}{\Gamma^h}(1+\epsilon)\ge\mint{\mathcal I(v+m(v))}{}{\Gamma}$. It is easy to show that hence $m_h(v_l)$ is unique. Set 
$C =   \C  \max(|a|,|b|)\mint{\Gamma}{}{\Gamma}$.
Decreasing $h$ further if necessary ensures
$$
\frac{Ch^2}{\mint{\mathcal I_{v}^\delta}{}{\Gamma^h}}\le (1+\epsilon)\frac{C h^2}{\mint{\mathcal I(v+m(v))}{}{\Gamma}} \le \delta\,.
$$
For $x\in\R$ let
$$
M^{h}_{v}(x) = \mint{\Gamma^h}{\Pab{v_l+x}}{\Gamma^h}\,.
$$
Since $\mint{\Gamma}{\Pab{v+m(v)}}{\Gamma}=0$, Lemma \ref{L:Integration} yields 
$$|M_v^h(m(v))|\le \C  \|\Pab{v+m(v)}\|_{L^1(\Gamma)} h^2\le C h^2\,.$$ 
Let us assume w.l.o.g.  $-C h^2\le M_v^h(m(v))\le 0$. Then
$$
M_v^h\left(m(v)+\frac{Ch^2}{\mint{\mathcal I_{v}^\delta}{}{\Gamma^h}}\right)\ge M_v^h\left(m(v)\right)+ Ch^2 \ge 0
$$
implies $0\le m(v)-m_h(v)\le \frac{Ch^2}{\mint{\mathcal I_v^\delta}{}{\Gamma^h}}\le  \frac{(1+\epsilon) C}{\mint{\mathcal I(v+m(v))}{}{\Gamma}}h^2$, since $M^h_v(x)$ is continuous with respect to  $x$. This proves the claim.
\end{proof}
Because $$\left(\pr{\Uadh}{v_l}\right)^l-\pr{\Uad}{v}=\pr{[a,b]}{v+m_h(v_l)}-\pr{[a,b]}{v+m(v)}\,,$$ we get the following corollary.
\begin{Corollary}\label{C:ProjError}
Let $h>0$ be sufficiently small and $C_m$ as in Lemma \ref{L:mConvergence}. For any fixed $v\in\L2$ with $\mint{\mathcal I(v+m(v))}{}{\Gamma}>0$ we have
\begin{equation*}
 \left\| \left(\pr{\Uadh}{v_l}\right)^l-\pr{\Uad}{v}\right\|_\L2\le C_m\frac{\sqrt{\mint{\Gamma}{}{\Gamma}}}{\mint{\mathcal I(v+m(v))}{}{\Gamma}}h^2\,.
\end{equation*}
\end{Corollary}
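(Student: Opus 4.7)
The plan is to use the identity displayed immediately before the corollary, namely
$$\left(\pr{\Uadh}{v_l}\right)^l-\pr{\Uad}{v}=\pr{[a,b]}{v+m_h(v_l)}-\pr{[a,b]}{v+m(v)},$$
which already isolates where the two projections disagree: only through the scalar shifts $m_h(v_l)$ and $m(v)$. So the real work is the scalar estimate from \Lemmaref{L:mConvergence}, and everything else is a one-line pointwise majorization.

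First I would observe that the clipping map $\Pab{\cdot}:\R\to[a,b]$ is $1$-Lipschitz. Since the two shifts $m_h(v_l)$ and $m(v)$ are constants on $\Gamma$, for every $\gamma\in\Gamma$ one has
$$\bigl|\Pab{v(\gamma)+m_h(v_l)}-\Pab{v(\gamma)+m(v)}\bigr|\le|m_h(v_l)-m(v)|.$$
Taking the $\L2$-norm therefore produces the constant function bound
$$\bigl\|\Pab{v+m_h(v_l)}-\Pab{v+m(v)}\bigr\|_\L2\le |m_h(v_l)-m(v)|\sqrt{\mint{\Gamma}{}{\Gamma}}.$$

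Second, I would invoke \Lemmaref{L:mConvergence} (which applies because the hypothesis $\mint{\mathcal I(v+m(v))}{}{\Gamma}>0$ is exactly what the lemma requires) to control $|m_h(v_l)-m(v)|$ by $C_m h^2/\mint{\mathcal I(v+m(v))}{}{\Gamma}$. Combining this with the previous display yields the claimed bound and finishes the proof.

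I do not anticipate any real obstacle: the corollary is an immediate consequence of the identity and the preceding lemma, modulo the observation that the pointwise projection is a contraction. The only thing to double check is that the lemma's smallness-of-$h$ threshold suffices here, but since the corollary explicitly assumes $h>0$ sufficiently small and borrows the constant $C_m$ from the lemma, this is automatic.
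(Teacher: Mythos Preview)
Your proposal is correct and matches the paper's own argument: the paper derives the corollary directly from the displayed identity combined with \Lemmaref{L:mConvergence}, and you have spelled out the only missing detail, namely the $1$-Lipschitz property of the pointwise clipping $\Pab{\cdot}$, which converts the scalar bound on $|m_h(v_l)-m(v)|$ into the $\L2$-bound with the factor $\sqrt{\mint{\Gamma}{}{\Gamma}}$.
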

Note that since for $u\in\L2$ the adjoint  $p(u)$ is a continuous  function on $\Gamma$, the corollary is applicable for $v=-\frac{1}{\alpha}p(u)$.

The following theorem can be proofed along the lines of Theorem \ref{T:Convergence}.
\begin{Theorem}
Let $u\in\L2$, $u_h\in\Ld$ be the solutions of \eqref{E:OptProbl} and \eqref{E:OptProblDiscr}, respectively, in the case $\mathbf c=0$. Let $\tilde u_h=\left(\pr{\Uadh}{-\frac{1}{\alpha}p(u)_l}\right)^l$. Then there holds for $\epsilon>0$ and $0\le h< h_\epsilon$
\[
\begin{split}
\alpha\|u^l_h-\tilde u_h\|_\L2^2+\big\|y^l_h-y\big\|_\L2^2\le(1+\epsilon)\bigg(\frac{1}{\alpha}\left\|\left(\lup S_h^*\ldown- S^*\right)(y-z)\right\|_\L2^2&\dots\\
+\left\|\lup S_h\ldown\tilde u_h-y\right\|_\L2^2\bigg)&\,.
\end{split}
\]
\end{Theorem}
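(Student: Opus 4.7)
The plan is to mimic the proof of \Theoremref{T:Convergence} essentially verbatim, after observing that the only obstruction is the failure of the projection compatibility \eqref{E:ProjRel} when the mean-zero constraint enters $\Uad$ and $\Uadh$. The role played by $u_l$ in the proof of \Theoremref{T:Convergence} will now be played by $\hat u_h := (\tilde u_h)_l = \pr{\Uadh}{-\frac{1}{\alpha}p(u)_l} \in \Uadh$, which is an admissible test function by construction. Testing the optimality condition of \eqref{E:OptProblDiscr} against $\hat u_h$ yields
\[
\langle \alpha u_h + p_h(u_h), \hat u_h - u_h\rangle_\Ld \ge 0,
\]
while the variational characterization of $\hat u_h$ as the $\Ld$-projection of $-\frac{1}{\alpha}p(u)_l$ onto $\Uadh$, tested against $u_h \in \Uadh$, gives $\langle \alpha \hat u_h + p(u)_l, u_h - \hat u_h\rangle_\Ld \ge 0$. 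Adding these two inequalities causes the quadratic terms to combine, leaving
\[
\alpha\|\hat u_h - u_h\|_\Ld^2 \le \langle p_h(u_h) - p(u)_l, \hat u_h - u_h\rangle_\Ld.
\]

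Next I would insert the intermediate term $S_h^*(y-z)_l$ and split the right-hand side as in the original argument. The first contribution $\langle p_h(u_h) - S_h^*(y-z)_l, \hat u_h - u_h\rangle_\Ld$ rewrites as $\langle y_h - y_l, S_h\hat u_h - y_h\rangle_\Ld$, to which the identity $\langle a, b-a\rangle = -\|a\|^2 + \langle a,b\rangle$ and a Young inequality apply, producing $-\tfrac12\|y_h-y_l\|_\Ld^2 + \tfrac12\|S_h\hat u_h - y_l\|_\Ld^2$. The second contribution $\langle S_h^*(y-z)_l - p(u)_l, \hat u_h - u_h\rangle_\Ld$ is handled by the usual Young inequality with weight $\alpha$, absorbing $\tfrac{\alpha}{2}\|\hat u_h - u_h\|_\Ld^2$. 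Collecting terms gives
\[
\alpha\|\hat u_h-u_h\|_\Ld^2 + \|y_h-y_l\|_\Ld^2 \le \tfrac{1}{\alpha}\|S_h^*(y-z)_l - p(u)_l\|_\Ld^2 + \|S_h\hat u_h - y_l\|_\Ld^2.
\]

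Finally, I would convert $\Ld$-norms into $\L2$-norms. Using $\hat u_h = \ldown\tilde u_h$ and the identities $S_h\hat u_h - y_l = \ldown(\lup S_h\ldown\tilde u_h - y)$ and $S_h^*(y-z)_l - p(u)_l = \ldown((\lup S_h^*\ldown - S^*)(y-z))$, the norm equivalence \eqref{E:IntBnd} bounds the right-hand side above by a factor $1+\C h^2$ and the left-hand side below by a factor $1-\C h^2$. This yields the estimate with constant $\frac{1+\C h^2}{1-\C h^2}$, which can be made smaller than $1+\epsilon$ by choosing $h<h_\epsilon$.

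The only genuinely new ingredient compared with \Theoremref{T:Convergence} is the substitution of $\hat u_h$ for $u_l$; the main conceptual obstacle is simply recognising that this substitution is the correct one, i.e.\ that the resulting inequality measures the distance between $u_h^l$ and $\tilde u_h$ rather than between $u_h^l$ and $u$. The missing component $\|\tilde u_h - u\|_\L2$ is exactly what \Corollaryref{C:ProjError} controls, so composing the two estimates (at the cost of a constant depending on $1/\meas{\mathcal I(-\frac{1}{\alpha}p(u)+m(-\frac{1}{\alpha}p(u)))}$) will recover full quadratic convergence of $u_h^l$ to $u$.
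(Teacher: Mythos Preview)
Your proposal is correct and is exactly the approach the paper has in mind: it states only that the theorem ``can be proofed along the lines of Theorem~\ref{T:Convergence}'', and your replacement of $u_l$ by $\hat u_h=(\tilde u_h)_l=\pr{\Uadh}{-\frac{1}{\alpha}p(u)_l}$ is precisely the adaptation needed once \eqref{E:ProjRel} fails. The remaining steps---the splitting via $S_h^*(y-z)_l$, the two Young inequalities, and the conversion of $\Ld$-norms to $\L2$-norms using \eqref{E:IntBnd}---are identical to the proof of Theorem~\ref{T:Convergence}, and your final remark about combining with Corollary~\ref{C:ProjError} matches the paper's subsequent discussion.
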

Using Corollary \ref{C:ProjError} we conclude from the theorem
\[\begin{split}
\|u^l_h-u\|_\L2\le& C\left(\frac{1}{\alpha}\left\|\bigg(\lup S_h^*\ldown- S^*\right)(y-z)\right\|_\L2 + \frac{1}{\sqrt{\alpha}}\left\|\left(\lup S_h\ldown-S\right)u\right\|_\L2\dots\\
&+\left(1+\frac{\|S\|_{\mathcal L(\L2,\L2)}}{\sqrt{\alpha}}\right) \frac{C_m\sqrt{\mint{\Gamma}{}{\Gamma}}\,h^2}{\mint{\mathcal I(-\frac{1}{\alpha}p(u)+m(-\frac{1}{\alpha}p(u)))}{}{\Gamma}}\bigg)\,,
\end{split}\]
the latter part of which is the error introduced by the discretization of $\Uad$.
Hence one has  $h^2$-convergence of the optimal controls.
\section{Numerical Examples}\label{S:Examples}
The figures show some selected  Newton steps $u^+$. Note that jumps of the color-coded function values are well observable along the border between active and inactive set. For all examples Newton's method is initialized with $u_0\equiv 0$.

The meshes are generated from a macro triangulation through congruent refinement, new nodes are projected onto the surface $\Gamma$. The maximal edge length $h$ in the triangulation is not exactly halved in each refinement, but up to an error of order $O(h^2)$. Therefore we just compute our estimated order of convergence (EOC) according to
\[
EOC_i = \frac{\ln \|u_{h_{i-1}}-u_l\|_{L^2(\Gamma^{h_{i-1}})}-\ln \|u_{h_i}-u_l\|_{L^2(\Gamma^{h_{i}})}}{\ln(2)}.
\]
For different refinement levels, the tables show $L^2$-errors, the corresponding EOC and the number of Newton iterations before the desired accuracy of $10^{-6}$ is reached. 


It was shown in \cite{HintermuellerUlbrich2004}, under certain assumptions on the behaviour of  $-\frac{1}{\alpha} p(u)$, that the undamped Newton Iteration is mesh-intdependent.  These assumptions are met by all our examples, since the surface gradient of  $-\frac{1}{\alpha} p(u)$ is bounded away from zero along the border of the inactive set. Moreover, the displayed number of Newton-Iterations suggests mesh-independence of the semi-smooth Newton method. 

\begin{Example}[Sphere I]\label{Ex:Sphere_two}
We consider the problem 
\begin{equation}\label{P:SphereRegular}
\min_{u\in\L2,\,y\in \Hone} J(u,y) \text{ subject to }
-\Delta_\Gamma y + y=u - r,\quad  -1\le u\le1
\end{equation}
with $\Gamma$ the unit sphere in $\R^3$ and $\alpha = 1.5\cdot10^{-6}$. We choose $z=52\alpha x_3(x_1^2-x_2^2)$ , to obtain the solution
\[
\bar u = r = \min\big(1,\max\big(-1,4x_3(x_1^2-x_2^2)\big)\big)
\]
of \eqref{P:SphereRegular}.
\end{Example}

\begin{figure}
\center
\includegraphics[width =\textwidth]{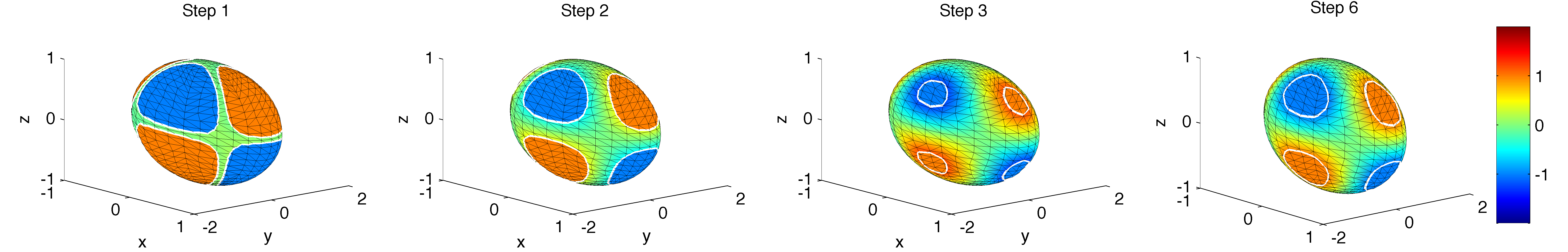}
\caption{Selected full Steps $u^+$ computed for Example \ref{Ex:Sphere_two} on the twice refined sphere.}
\end{figure}

\begin{table}
\begin{tabular}{r|cccccc}
reg. refs. & 0 & 1 & 2 & 3 & 4 & 5 \\
\hline
 $L2$-error  & 5.8925e-01 &  1.4299e-01  & 3.5120e-02  & 8.7123e-03  & 2.2057e-03 & 5.4855e-04\\ 
 EOC & -    &  2.0430  &  2.0255   & 2.0112   & 1.9818   & 2.0075\\
 \# Steps & 6   &  6   &  6 &    6 &    6 &    6\\
 \end{tabular}
 \caption{$L^2$-error, EOC and number of iterations for Example \ref{Ex:Sphere_two}.}
 \end{table}

\begin{Example}\label{Ex:Graph}
Let $\Gamma = \set{(x_1,x_2,x_3)^T\in\R^3}{x_3=x_1x_2\land x_1,x_2\in(0,1)}$ and $\alpha =10^{-3}$. For
\begin{equation*}
\min_{u\in\L2,\,y\in \Hone} J(u,y) \text{ subject to }
-\Delta_\Gamma y =u - r,\quad y=0\text{ on }\partial\Gamma\quad  -0.5\le u\le0.5
\end{equation*}
we get
\[
\bar u = r = \max\big(-0.5,\min\big(0.5,\sin(\pi x)\sin(\pi y)\big)\big)
\]
by proper choice of $z$ (via symbolic differentiation).
\end{Example}
 
 \begin{figure}
\center
\includegraphics[width =\textwidth]{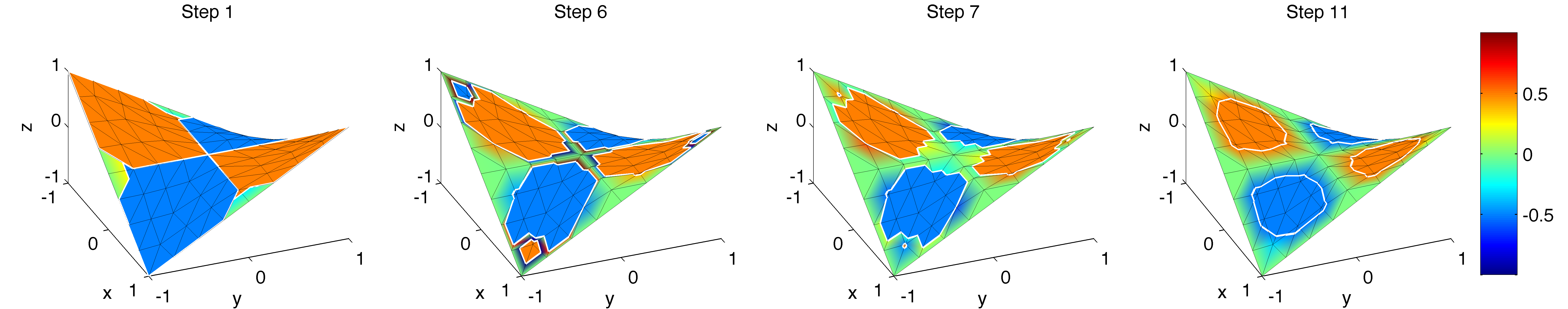}
\caption{Selected full Steps $u^+$ computed for Example \ref{Ex:Graph} on the twice refined grid.}
\end{figure}

\begin{table}
\begin{tabular}{r|cccccc}
reg. refs. & 0 & 1 & 2 & 3 & 4 & 5 \\
\hline
 $L2$-error  & 3.5319e-01 &  6.6120e-02  & 1.5904e-02  & 3.6357e-03 &8.8597e-04  & 2.1769e-04\\ 
 EOC & -    & 2.4173   & 2.0557 &   2.1291 &   2.0369  &  2.0250\\
 \# Steps & 11  &  12 &   12  &  11 &   13  &  12\\
 \end{tabular}
 \caption{$L^2$-error, EOC and number of iterations for Example \ref{Ex:Graph}.}
 \end{table}

Example \ref{Ex:Graph}, although $\mathbf c=0$, is also covered by the theory in Sections \ref{S:Intro}-\ref{S:Implementation}, as by the Dirichlet boundary conditions the state equation remains uniquely solvable for $u\in\L2$.
In the last two examples we apply the variational discretization to optimization problems, that involve zero-mean-value constraints as in Section \ref{Ciszero}. 

 \begin{figure}
\center
\includegraphics[width =\textwidth]{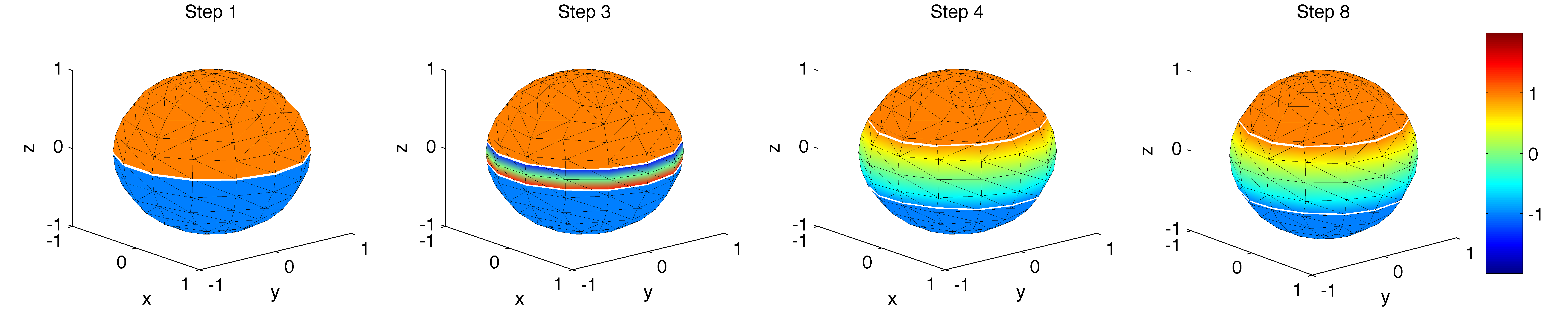}
\caption{Selected full Steps $u^+$ computed for Example \ref{Ex:Sphere} on once refined sphere.}
\end{figure}

\begin{table}
\begin{tabular}{r|cccccc}
reg. refs. &0 & 1 & 2 & 3 & 4 & 5\\
\hline
 $L2$-error  &6.7223e-01 &  1.6646e-01  & 4.3348e-02 &  1.1083e-02 &  2.7879e-03  & 6.9832e-04\\ 
 EOC & -    & 2.0138 &  1.9412  &   1.9677  &   1.9911  &   1.9972 \\
 \# Steps & 8   &  8  &   7 &    7  &   6  &   6\\
 \end{tabular}
 \caption{$L^2$-error, EOC and number of iterations for Example \ref{Ex:Sphere}.}
 \end{table}

 \begin{figure}
\center
\includegraphics[width =\textwidth]{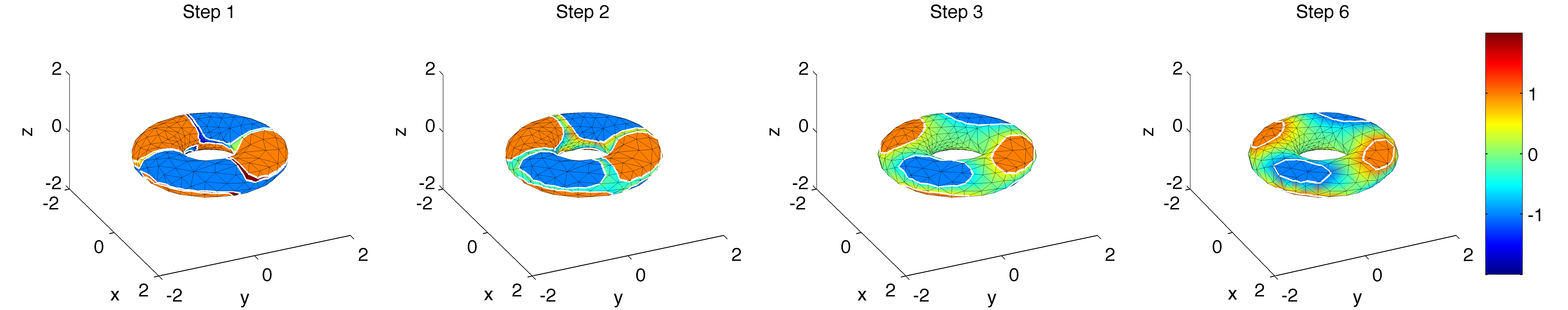}
\caption{Selected full Steps $u^+$ computed for Example \ref{Ex:Torus} on the once refined torus.}
\end{figure}

\begin{table}
\begin{tabular}{r|cccccc}
reg. refs. &0 & 1 & 2 & 3 & 4 & 5\\
\hline
 $L2$-error  &   3.4603e-01 &  9.8016e-02 &  2.6178e-02   &6.6283e-03  & 1.6680e-03  & 4.1889e-04\\ 
 EOC & -    & 1.8198e+00 &  1.9047e+00  & 1.9816e+00 &  1.9905e+00  & 1.9935e+00\\
 \# Steps & 9   &  3    & 3 &    3 &    2   &  2 \\
 \end{tabular}
 \caption{$L^2$-error, EOC and number of iterations for Example \ref{Ex:Torus}.}
 \end{table}

\begin{Example}[Sphere II]\label{Ex:Sphere}
We consider 
\begin{equation*}
\begin{split}
\min_{u\in\L2,\,y\in \Hone} J(u,y)
\text{ subject to } -\Delta_\Gamma y=u\,,\quad -1\le u\le 1\,,\quad \mint{\Gamma}{y}{\Gamma} = \mint{\Gamma}{u}{\Gamma}=0\,,
\end{split}
\end{equation*}
 with $\Gamma$ the unit sphere in $\R^3$. Set $\alpha=10^{-3}$ and 
\[
z(x_1,x_2,x_3) = 4\alpha x_3 + \left\{\begin{array}{rlc} \ln(x_3+1)+C \,,& \textup{if }  &0.5\le x_3\\ 
x_3-\frac{1}{4}\textup{arctanh}(x_3)\,,&\textup{if } &-0.5\le x_3\le 0.5\\
-C-\ln(1-x_3)\,, & \textup{if }& x_3\le -0.5
\end{array}\right.\,,
\]
where $C$ is chosen for $z$ to be continuous.
The solution according to these parameters is
\[
\bar u = \min\big(1,\max\big(-1, 2 x_3\big)\big)\,.
\]
\end{Example}

\begin{Example}[Torus]\label{Ex:Torus}
Let $\alpha=10^{-3}$ and  $$\Gamma = \set{(x_1,x_2,x_3)^T\in\R^3}{\sqrt{x_3^2+\left(\sqrt{x_1^2+x_2^2}-1\right)^2}=\frac{1}{2}}$$ the 2-Torus embedded in $\R^3$. By symbolic differentiation we compute $z$, such that
\[
\min_{u\in\L2,\,y\in \Hone} J(u,y)
\text{ subject to }
-\Delta_\Gamma y=u-r,\quad -1\le u\le 1\,,\quad  \mint{\Gamma}{y}{\Gamma}=\mint{\Gamma}{u}{\Gamma}=0
\]
is solved by
\[
\bar u = r = \max\big(-1,\min\big(1,5xyz\big)\big)\,.
\]
\end{Example}
As the presented tables clearly demonstrate, the examples show the expected convergence behaviour.
\section*{Acknowledgement}
The authors would like to thank Prof. Dziuk for the fruitful discussion during his stay in Hamburg in November 2010.

\bibliographystyle{annotate}
\bibliography{/Users/morten/Documents/Bibs/all}
\end{document}